\documentclass[10pt, a4paper]{amsart}
\usepackage[arXiv]{rseaineng}
\usepackage[cal]{rseaincmds}
\usepackage{hhline, array, bm}
\usepackage{ifthen}
\usepackage{nicematrix}

\usepackage{cellspace}
\allowdisplaybreaks

\DeclareMathOperator{\height}{ht}

\numberwithin{equation}{section}

\title[]{Equivariant Ehrhart series for rational simplices and rational cross-polytopes}
\author[Ryo Uchiumi]{Ryo Uchiumi}
\date{\today}
\subjclass[2020]{52B20, 05E18, 20C10} %17B22 : root system
\keywords{}
\thanks{The author was supported by JSPS KAKENHI, Grant Number 25KJ1735}

\begin{document}

\begin{abstract}
	Equivariant Ehrhart theory was introduced by Stapledon (2011) and has since been extensively studied. 
	A central question is whether the numerator in a certain rational function expression for the equivariant Ehrhart series is both a polynomial and effective.
	
	In this paper, we propose a modification of the denominator in the rational function expression for the equivariant Ehrhart series of rational polytopes. For rational simplices and rational cross-polytopes whose vertices have coordinates that are reciprocals of integers, we show that the resulting numerator is a polynomial and is effective.
\end{abstract}

\maketitle

\tableofcontents

%%%%%%%%%%%%%%%%%%%%%%%%%%%%%%%%%%%%%%%%%%%%%%%%%%%%%%%

%\fontsize{10pt}{15pt}\selectfont

\section{Introduction}

Let $L \cong \mathbb{Z}^\ell$ be a lattice, and $L_\mathbb{K} \ceq L \otimes \mathbb{K} \cong \mathbb{K}^\ell$ for $\mathbb{K} = \mathbb{Q}$ or $\mathbb{R}$.
Let $P \subseteq L_\mathbb{R}$ be an $\ell$-dimensional polytope.
The \textbf{Ehrhart function} $\Ell_P(q) = \#(qP \cap L)$ with $q \in \mathbb{Z}_{\geq 0}$
is a counting function of lattice points of $qP$.
If $P$ is a lattice polytope, then $\Ell_P(q)$ is a polynomial in $q$, called the \textbf{Ehrhart polynomial} of $P$.
If $P$ is a rational polytope, then $\Ell_P(q)$ is a quasi-polynomial in $q$, that is, there exist $\tilde{n} \in \mathbb{Z}_{>0}$ and polynomials $f^{(1)}(t),\ldots,f^{(\tilde{n})}(t) \in \mathbb{Q}[t]$ such that
\begin{align}
	\Ell_P(q) = f^{(r)}(q) \IF q \equiv r \pmod{\tilde{n}}.
\end{align}
Then $\Ell_P(q)$ is called the \textbf{Ehrhart quasi-polynomial} of $P$.
The number $\tilde{n}$ is called a \textbf{period} and the polynomial $f^{(r)}(t)$ is called the \textbf{$\bm r$-th constituent}.
The denominator of $P$, the smallest positive integer $k$ for which $kP$ is a lattice polytope, is a period of the quasi-polynomial $\Ell_P(q)$, but it is not necessarily the minimum period.
Note that a polynomial is a quasi-polynomial of period $1$.

Let $P^\circ$ denote the interior of $P$, and define $\Ell_{P^\circ}(q) = \#(qP^\circ \cap L)$.
Then the following formula is known as \textbf{Ehrhart reciprocity}:
\begin{align}
	\Ell_P(-q) = (-1)^\ell \Ell_{P^\circ}(q).
\end{align}

The \textbf{Ehrhart series} of $P$ is a power series given by
\begin{align}
	\Ehr(P;t) \ceq \sum_{q \in \mathbb{Z}_{\geq 0}}\Ell_P(q)t^q.
\end{align}
If $P$ is a rational polytope, then $\Ehr(P;t)$ is a rational function of $t$.
In particular, it is well known that it has the form
\begin{align}
	 \Ehr(P;t) = \frac{h^\ast(t)}{(1-t^d)^{\ell+1}} \label{series}
\end{align}
for some polynomial $h^\ast(t) \in \mathbb{Z}[t]$ called the \textbf{$\bm h^{\bm\ast}$-polynomial} of $P$, where $d$ is the denominator of $P$.
Following Stanley \cite[\S1]{StanleyDecomp}, %Decompositions of rational convex polytopes
each coefficient of $h^\ast(t)$ is non-negative.

Stapledon introduced the equivariant Ehrhart theory, which is a generalization of Ehrhart theory, in \cite{Stapledon}.
This is closely related to toric geometry, representation theory, and mirror symmetry.
Let $\GAMMA$ be a finite group and $\Cl(\GAMMA)$ denote the set of class functions on $\GAMMA$.
Suppose that $\GAMMA$ acts linearly on a lattice $L$ via $\rho : \GAMMA \lra \GL(L)$.
Let $L' \ceq L \oplus \mathbb{Z} \cong \mathbb{Z}^{\ell+1}$ and $\rho' : \GAMMA\lra \GL(L')$ be the representation induced by 
\begin{align}
	\rho'(\gamma) : (x,n) \lmapsto (\rho(\gamma)(x),\,n)
	\quad \tfor \gamma \in \GAMMA,\ (x,n) \in L'.
\end{align}
Let $P \subseteq L_\mathbb{R}$ be an $\ell$-dimensional rational polytope, and suppose that $P$ is $\GAMMA$-invariant.
Then we can consider the character of the permutation representation of $\GAMMA$ on $qP \cap L$ for each $q \in \mathbb{Z}_{>0}$, denoted by $\chi_{P,q}$.
Evaluating $\chi_{P,q}$ at the identity element $1 \in \GAMMA$ recovers the Ehrhart quasi-polynomial:
\begin{align}
	\chi_{P,q}(1) = \Ell_P(q).
\end{align}
Thus it is the equivariant analogue of the Ehrhart quasi-polynomial.
Furthermore, a function $q \lmapsto \chi_{P,q}$ is a quasi-polynomial in $q$, and the coefficients of each of its constituents are linear combinations of irreducible characters of $\GAMMA$ with rational coefficients.

Let $\chi_{P,q}^\ast$ denote the character of the permutation representation of $\GAMMA$ on $qP^\circ \cap L$.
Then the following formula is the equivariant version of the Ehrhart reciprocity:
\begin{align}
	\chi_{P,-q} = (-1)^\ell \cdot \det{\rho} \cdot \chi_{P,q}^\ast. \label{recip}
\end{align}

The \textbf{equivariant Ehrhart series} is a power series given by
\begin{align}
	\Ehr_\rho(P;t) = \sum_{q \in \mathbb{Z}_{\geq 0}}\chi_{P,q} t^q.
\end{align}
In the case where $P$ is a lattice polytope, expressions of the form 
\begin{align}
	\Ehr_\rho(P;t) = \frac{H^\ast[t]}{(1-t)\det(I - t\rho )} = \frac{H^\ast[t]}{\det(I - t\rho' )} \label{equivseries}
\end{align}
for some $H^\ast[t] = \sum_{n \in \mathbb{Z}_{\geq 0}}H_n^\ast t^n$ have been well studied extensively, where $\{H_n^\ast\}_{n \in \mathbb{Z}_{\geq 0}}$ are virtual characters of $\GAMMA$, that is, each $H_n^\ast$ is a linear combination of irreducible characters of $\GAMMA$ with integer coefficients.
Evaluating $H^\ast[t]$ at the identity element recovers the $h^\ast$-polynomial.
Hence the series $H^\ast[t]$ is the equivariant analogue of the $h^\ast$-polynomial.
Note that $H^\ast[t]$ may not always be a polynomial (see \cite[Lemma 7.3 and Example 7.5]{Stapledon}).
We say that $H^\ast[t]$ is \textbf{effective} if each $H_n^\ast$ is a character, that is, if $H_n^\ast$ is a linear combination of irreducible characters of $\GAMMA$ with non-negative integer coefficients.
If $H^\ast[t]$ is effective, then it is a polynomial.
The converse has been studied as the effectiveness conjecture {\cite[Conjecture 12.1]{Stapledon}}.
Very recently, several counterexamples to the effectiveness conjecture have been independently given by Chalmers \cite[Theorem]{Chalmers}, Clarke \cite[Example 5.6 and Example 5.7]{Clarke}, and Ziegler \cite[Theorem 1.1 and Theorem 1.2]{Ziegler}.
Earlier, Clarke--Higashitani--K\"olbl had already given an example, in a setting in which $P$ is allowed to be a rational polytope, in which $H^\ast[t]$ is a polynomial but is not effective \cite[Example 4.4]{ClarkeHigashitaniKolbl}.

In this paper, we propose a modification of the denominator used in the rational function expression for the equivariant Ehrhart series of rational polytopes.
More specifically, we consider the form
\begin{align}
	\Ehr_\rho(P;t) = \frac{\varphi[t]}{(1-t^{d_0})\det(I - TR)} \OR  
	\Ehr_\rho(P;t) = \frac{\varphi[t]}{\det(I - T'R')} \label{EEhr}
\end{align}
for some $\varphi[t] = \sum_{n \in \mathbb{Z}_{\geq 0}}\varphi_nt^n \in \Cl(\GAMMA)[[t]]$, where $T  =\diag(t^{d_1},\ldots,t^{d_\ell})$ and $T' = \diag(t^{d_0},t^{d_1},\ldots,t^{d_\ell})$ for given $d_0,\ldots,d_\ell \in \mathbb{Z}_{>0}$, and $R : \GAMMA\lra \GL(\mathbb{Z}^\ell)$ and $R' : \GAMMA \lra \GL(\mathbb{Z}^{\ell+1})$ are homomorphisms associated with $\rho$ and $\rho'$ with respect to a fixed basis, respectively.
Note that each $\varphi_n$ need not be a virtual character for certain choices of integers and homomorphisms.
In \cref{sec1}, we will investigate which representation gives rise to the denominator.
In \cref{sec2}, we will show that, for rational simplices, there exist appropriate choices of $d_0,\ldots,d_\ell$ and $R'$ such that $\varphi[t]$ is a polynomial and effective (\cref{thm2.1} and \cref{cor2.3}).
In \cref{sec3}, we will show that, for cross-polytopes whose vertices have coordinates that are reciprocals of integers, there exist appropriate choices of $d_0,\ldots,d_\ell$ and $R$ such that $\varphi[t]$ is a polynomial and effective (\cref{thm3.3} and \cref{thm4.6}).
Furthermore, such $\varphi[t]$ is palindromic.
This class of polytopes contains the counterexample found by Clarke--Higashitani--K\"olbl.
In \cref{sec3.1}, we will review the hyperoctahedral group, the automorphism group of a cross-polytope.

\section{Series of graded polynomial rings}\label{sec1}

Fix $d_1,\ldots,d_\ell \in \mathbb{Z}_{>0}$, and let $T \ceq \diag(t^{d_1},\ldots,t^{d_\ell})$.
Let $\mathbb{C}[z] \ceq \mathbb{C}[z_1,\ldots,z_\ell]$ be the graded $\mathbb{C}$-algebra with $\deg{z_i} = d_i$ for $i \in \{1,\ldots,\ell\}$, and let $\mathbb{C}[z]_m$ denote the linear subspace of $\mathbb{C}[z]$ generated by homogeneous monomials of degree $m$.

Let $\GAMMA$ be a finite group and $R : \GAMMA\lra \GL(\mathbb{Z}^\ell)$ a group homomorphism.
Then $R$ induces a $\GAMMA$-action on $\mathbb{C}[z]$ defined as 
\begin{align}
	\gamma : c_1z_1 + \cdots + c_\ell z_\ell = (c_1 \ \cdots \ c_\ell)\begin{pmatrix}
		z_1\\ \vdots \\ z_\ell
	\end{pmatrix} \lmapsto (c_1 \ \cdots \ c_\ell)R(\gamma)\begin{pmatrix}
	z_1\\ \vdots \\ z_\ell
	\end{pmatrix}.
\end{align}

\begin{lemma}\label{lem1.1}
	Let $R : \GAMMA\lra \GL(\mathbb{Z}^\ell)$ be a group homomorphism.
	Suppose that, for each $\gamma \in \GAMMA$, there exists an invertible matrix $P_\gamma$ such that $P_\gamma R(\gamma)P_\gamma^{-1}$ is a diagonal matrix and $P_\gamma$ commutes with $T$.
	Then $\mathbb{C}[z]_m$ is a representation space of $\GAMMA$ for each $m \in \mathbb{Z}_{\geq 0}$.
	Furthermore, we have
	\begin{align}
		\sum_{m \in \mathbb{Z}_{\geq 0}} \chi_{\mathbb{C}[z]_m} t^m = \frac{1}{\det(I - TR)},
	\end{align}
	where $\chi_{\mathbb{C}[z]_m}$ is the character of $\GAMMA$ on $\mathbb{C}[z]_m$.
\end{lemma}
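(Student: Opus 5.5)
The plan is to work one group element at a time: first show that $\gamma$ preserves each graded piece, then compute its trace generating function after a change of variables that diagonalizes $\gamma$ and respects the grading.

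\textbf{Preservation of degree.} The action of $\gamma$ is given on linear forms by $z_i \mapsto \sum_j R(\gamma)_{ij} z_j$ and extended multiplicatively. It preserves $\mathbb{C}[z]_m$ for all $m$ exactly when $R(\gamma)_{ij}\neq 0$ forces $d_i = d_j$.

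Since $P_\gamma$ commutes with $T=\diag(t^{d_1},\ldots,t^{d_\ell})$, and this holds as an identity in $t$, we get $(P_\gamma)_{ij}=0$ whenever $d_i\neq d_j$. So $P_\gamma$ is block diagonal with respect to the partition of indices by degree. The same then holds for $P_\gamma^{-1}$ and for the diagonal matrix $D_\gamma = P_\gamma R(\gamma) P_\gamma^{-1}$, hence for $R(\gamma) = P_\gamma^{-1} D_\gamma P_\gamma$. Therefore each $\gamma$ maps the span of the $z_j$ of a fixed degree to itself, and so maps homogeneous polynomials of degree $m$ to homogeneous polynomials of degree $m$.

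The group-action axioms are inherited from $R$ being a homomorphism, since the action is the induced action on $\operatorname{Sym}$ of the linear forms. I would note the convention (left versus right action, possibly via transpose or inverse) and observe that it does not affect the character computation below, because $\det(I-TR(\gamma))$ is unchanged under transposition, and the final formula is stated in terms of $R(\gamma)$. This settles the first claim.

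\textbf{Character computation.} Fix $\gamma$ and introduce new variables $w = P_\gamma z$, or the transpose version matching the action. Because $P_\gamma$ is block diagonal by degree, each $w_i$ is homogeneous of degree $d_i$. So $\mathbb{C}[z]=\mathbb{C}[w]$ as graded algebras, with $\deg w_i = d_i$, and $\gamma$ acts diagonally by $w_i\mapsto \lambda_i w_i$, where $\lambda_i$ are the diagonal entries of $D_\gamma$.

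The monomials $w^a$ of degree $m$ then form an eigenbasis of $\mathbb{C}[z]_m$, and
\begin{align}
\sum_m \chi_{\mathbb{C}[z]_m}(\gamma)t^m=\sum_{a\in\mathbb{Z}_{\ge0}^\ell}\prod_i (\lambda_i t^{d_i})^{a_i}=\prod_{i=1}^\ell \frac{1}{1-\lambda_i t^{d_i}}=\frac{1}{\det(I-TD_\gamma)} .
\end{align}
Finally, $\det(I-TR(\gamma))=\det(P_\gamma(I-TR(\gamma))P_\gamma^{-1})=\det(I-TD_\gamma)$, using $P_\gamma T=TP_\gamma$. This gives the identity pointwise in $\gamma$, hence as class functions.

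\textbf{Main obstacle.} The main care point is the step from "$P_\gamma$ commutes with $T$" to "$P_\gamma$ respects the grading". This needs $t$ to be treated as an indeterminate, or generic, so that $t^{d_i}\neq t^{d_j}$ when $d_i\neq d_j$. The other care point is keeping the left/right (transpose) conventions of the action consistent with the formula.
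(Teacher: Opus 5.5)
Your proposal is correct and follows essentially the same route as the paper: change basis by $P_\gamma$ to $u_i=\sum_j p_{ij}z_j$, which are homogeneous because commuting with $T$ forces $p_{ij}=0$ unless $d_i=d_j$. Then expand the graded trace as a product of geometric series in the eigenvalues, and conjugate $\det(I-TR(\gamma))$ by $P_\gamma$. Your explicit deduction that $R(\gamma)$ is block diagonal by degree, and your remark on the left/right convention, are small refinements of that same argument.
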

\begin{proof}
	Let $\gamma \in \GAMMA$, and $\{u_1,\ldots,u_\ell\}$ be a basis for $\mathbb{C}[z]$ obtained from $\{z_1,\ldots,z_\ell\}$ via $P_\gamma$, that is,
	\begin{align}
		u_i = \sum_{j=1}^\ell p_{ij}z_j \quad \tfor i \in \{1,\ldots,\ell\},
	\end{align}
	where $P_\gamma = (p_{ij})_{ij}$.
	Since $P_\gamma$ and $T$ commute, $p_{ij}  \neq 0$ implies $d_i = d_j$. 
	Hence each $u_i$ is a homogeneous element of $\mathbb{C}[z]$ with $\deg{u_i} = d_i$.
	Furthermore, we may regard $\gamma$ as acting via the diagonal matrix $P_\gamma R(\gamma) P_\gamma^{-1} = \diag(\lambda_1,\ldots,\lambda_\ell)$.
	Therefore, the action of $\GAMMA$ on $\mathbb{C}[z]$ preserves the degree, that is,  $\mathbb{C}[z]_m$ is a representation space for each $m \in \mathbb{Z}_{\geq 0}$.
	Hence we have
	\begin{align}
			\sum_{m \in \mathbb{Z}_{\geq 0}}\chi_{\mathbb{C}[z]_m}(\gamma) t^m
			&= \sum_{m \in \mathbb{Z}_{\geq 0}} \sum_{\substack{c \in \mathbb{Z}_{\geq 0}^{\ell}\\ \sum_{i=1}^\ell c_id_i = m}} \left( \prod_{i = 1}^\ell \lambda_i^{c_i} \right) t^m\\
			&= \sum_{c \in \mathbb{Z}_{\geq 0}}\left( \prod_{i=1}^\ell \lambda_i^{c_i} \right) t^{\sum_{i=1}^\ell c_id_i}\\
			&= \prod_{i=1}^\ell \left( \sum_{n \in \mathbb{Z}_{\geq 0}}(\lambda_i t^{d_i})^n \right)\\
			&= \frac{1}{\prod_{i=1}^\ell(1-t^{d_i}\lambda_i)}\\
			&= \frac{1}{\det(I - T P_\gamma R(\gamma) P_\gamma^{-1})}\\
			&= \frac{1}{\det{P_\gamma} \cdot \det(I - T R(\gamma)) \cdot \det{P_\gamma^{-1}}}\\
			&= \frac{1}{\det(I - T R(\gamma))}. 	\label{s3} \qedend
	\end{align}
\end{proof}

\section{Simplices} \label{sec2}

\noindent

Let $L$ be a lattice of rank $\ell$, and let $P \subseteq L_\mathbb{R}$ be a rational $\ell$-simplex with vertex set $V_P = \{v_0,\ldots,v_\ell\} \subseteq L_\mathbb{Q}$.
Suppose that $d = (d_0,\ldots,d_\ell)$ are positive integers such that $d_0v_0, \ldots, d_\ell v_\ell \in L$.

Let $L' \ceq L \oplus \mathbb{Z}$, and $L'_\mathbb{Q} \ceq L' \otimes \mathbb{Q}$.
We define the \textbf{height} of $x' = (x,n) \in L'$ to be the integer $n$, and denote it by $\height(x')$.
For $i \in \{0,\ldots,\ell\}$, define $v_i' \ceq (v_i,1) \in L_\mathbb{Q}'$.
Define sets $A(P,d)$ and $B(P,d)$ by
\begin{align}
	A(P,d) &\ceq \bigset{a_0(d_0v_0') + \cdots + a_\ell(d_\ell v_\ell') \in L'}{a_0,\ldots,a_\ell \in \mathbb{Z}_{\geq 0}},\\
	B(P,d) &\ceq \bigset{b_0(d_0v_0') + \cdots + b_\ell (d_\ell v_\ell') \in L'}{0 \leq b_i < 1 \tfor i \in\{0,\ldots,\ell\}}\\
	&= \bigset{b_0v_0' + \cdots + b_\ell v_\ell' \in L'}{0 \leq b_i < d_i \tfor i \in\{0,\ldots,\ell\}},
\end{align}
and let 
\begin{align}
	A(P,d)_m &\ceq \bigset{a \in A(P,d)}{\height(a) = m},\\
	B(P,d)_m &\ceq \bigset{b \in B(P,d)}{\height(b) = m}
\end{align}
for each $m \in \mathbb{Z}_{\geq 0}$.
Note that the set $B(P,d)$ is finite.

We review some results on the Ehrhart series of simplices.
\begin{theorem}[see {\cite[Theorem 1.3]{StanleyDecomp}}] \label{thmStanley}
	Suppose that $d = (d_0,\ldots,d_\ell)$ satisfies
	\begin{align}
		d_i = \min\bigset{k \in \mathbb{Z}_{>0}}{kv_i \in L} \quad \tforall i \in \{0,\ldots,\ell\}. \label{cond_denom}
	\end{align}
	Then we have
	\begin{align}
		\Ehr(P;t) 
		&= \frac{\sum_{b \in B(P,d)}t^{\height(b)}}{(1-t^{d_0})(1 - t^{d_1}) \cdots (1-t^{d_\ell})}\\[5pt]
		&= \frac{\sum_{m \in \mathbb{Z}_{\geq 0}}\#B(P,d)_m t^{m}}{(1-t^{d_0})(1 - t^{d_1}) \cdots (1-t^{d_\ell})}. \label{s31}
	\end{align}
\end{theorem}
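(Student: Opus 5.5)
The plan is the standard cone-decomposition argument. Let $C \subseteq L'_\mathbb{R}$ be the cone over $P\times\{1\}$, i.e.\ $C = \mathbb{R}_{\geq 0}v_0' + \cdots + \mathbb{R}_{\geq 0}v_\ell'$. Then the lattice points of $qP$ correspond bijectively to the points of $C \cap L'$ of height $q$, via $x \mapsto (x,q)$. Hence
\begin{align}
	\Ehr(P;t) = \sum_{x' \in C\cap L'} t^{\height(x')}.
\end{align}
Since $P$ is an $\ell$-simplex, the vectors $v_0',\ldots,v_\ell'$ are linearly independent in $L'_\mathbb{Q}$, and therefore so are $w_i \ceq d_iv_i' \in L'$. (The $w_i$ lie in $L'$ because $d_iv_i\in L$.) Every $x' \in C$ has a unique expression $x' = \sum_i c_i w_i$ with $c_i \in \mathbb{R}_{\geq 0}$.

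The key step is the decomposition $C\cap L' = A(P,d) \oplus B(P,d)$, meaning that every $x'\in C\cap L'$ is uniquely $a+b$ with $a\in A(P,d)$ and $b\in B(P,d)$. To see this, write $c_i = \lfloor c_i\rfloor + \{c_i\}$ and set $a = \sum_i \lfloor c_i\rfloor w_i$ and $b = \sum_i \{c_i\} w_i$. Then $a \in L'$ because $w_i\in L'$, so $b = x'-a\in L'$ as well, which gives $b\in B(P,d)$. Conversely, any sum $a+b$ lies in $C\cap L'$. Uniqueness follows from the uniqueness of the coefficients $c_i$ together with the uniqueness of the decomposition into integer and fractional parts. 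Finally, heights are additive, and $\height(w_i) = d_i$.

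Therefore
\begin{align}
	\Ehr(P;t) = \Bigl(\sum_{b\in B(P,d)} t^{\height(b)}\Bigr)\prod_{i=0}^{\ell}\Bigl(\sum_{a_i\geq 0} t^{a_id_i}\Bigr),
\end{align}
which yields the first displayed formula. The second follows by grouping the elements of $B(P,d)$ according to height; this is legitimate because $B(P,d)$ is finite, as it is a bounded set of lattice points.

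The only point needing care is the bijection between height-$q$ points of $C\cap L'$ and $qP\cap L$, together with the linear independence of the $v_i'$. Both are routine: a point of height $q$ in $C$ has coordinates $(x,q)$ with $x \in qP$. I note that the minimality hypothesis \eqref{cond_denom} is not actually needed for this identity; the argument works for any $d$ with $d_iv_i\in L$. I expect no real obstacle; the main step is the unique decomposition into $A(P,d)$ and $B(P,d)$.
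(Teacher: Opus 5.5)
Your proof is correct and follows essentially the paper's approach. The paper quotes this statement from Stanley rather than proving it, but its proof of Theorem~\ref{thm2.1} uses exactly your decomposition of the lattice points of the cone over $P\times\{1\}$ into sums $a+b$ with $a\in A(P,d)$ and $b\in B(P,d)$, followed by the same factorization of the generating function; your argument is the non-equivariant case, and it also spells out the uniqueness of the decomposition, which the paper leaves implicit.

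Your remark that the minimality condition on $d$ is unnecessary is also consistent with the paper: Theorem~\ref{thm2.1} holds for any $d$ with $d_iv_i\in L$, and evaluating it at the identity element gives this identity.
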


\begin{remark}
	Although the notation is changed, the above equation is the same as the first equation in Theorem 1.3 in \cite{StanleyDecomp}.
	Stanley's $\gamma_i$ and $\DELTA_H \cap \mathbb{Z}^{m+1}$ correspond, respectively, to $d_iv_i'$ and $B(P,d)$ in our notation.
\end{remark}

Let $\GAMMA$ be a finite group acting linearly on $L$ via $\rho : \GAMMA \lra \GL(L)$.
Suppose that a rational $\ell$-simplex $P$ is $\GAMMA$-invariant.
Since $\GAMMA$ acts on the vertex set $V_P$, for each $\gamma \in \GAMMA$, we can find a permutation $\sigma_\gamma \in \mathfrak{S}_{\ell+1}$ such that
\begin{align}
	\rho(\gamma)(v_i) = v_{\sigma_\gamma(i)} \quad \tforall i \in \{0,\ldots,\ell\}.
\end{align}
Moreover, suppose that $d_i = d_{\sigma_\gamma(i)}$ for all $\gamma \in \GAMMA$ and $i \in \{0,\ldots,\ell\}$.
Note that, if the condition \cref{cond_denom} is satisfied, then this assumption holds.
Let $R_V' : \GAMMA\lra \GL(\mathbb{Z}^{\ell+1})$ be a homomorphism such that $R_V'(\gamma)$ is the permutation matrix of $\sigma_\gamma$.
Furthermore, $A(P,d)_m$ and $B(P,d)_m$ are invariant under the action $\rho' : \GAMMA\lra \GL(L')$.

Let $T' \ceq \diag(t^{d_0},t^{d_1},\ldots,t^{d_\ell})$.

\begin{theorem}\label{thm2.1}
	With the notation above,
	\begin{align}
		\Ehr_\rho(P;t) = \frac{\sum_{m \in \mathbb{Z}_{\geq 0}}\chi_{B(P,d)_m} t^m}{\det(I - T'R_V')}.	\label{sthm2.1}
	\end{align}
	In particular, the numerator on the right-hand side is a polynomial and effective.
\end{theorem}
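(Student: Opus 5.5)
The plan is to prove an equivariant version of Stanley's decomposition, namely that every lattice point of the cone splits uniquely as an element of $A(P,d)$ plus an element of $B(P,d)$. Let $C \subseteq L'_\mathbb{R}$ be the cone over $P\times\{1\}$. Since $v_0',\ldots,v_\ell'$ are linearly independent, every $x' \in C\cap L'$ can be written uniquely as $x' = \sum_i c_i (d_iv_i')$ with $c_i \in \mathbb{R}_{\geq 0}$. Put $a_i = \lfloor c_i\rfloor$ and $b_i = c_i - a_i$. Then $a \ceq \sum a_i d_iv_i'$ lies in $L'$, because $d_iv_i \in L$. Hence $b = x'-a \in L'$ and $b \in B(P,d)$. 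This gives a bijection
\begin{align}
	C \cap L' \;\cong\; A(P,d) \times B(P,d),\qquad x' \lmapsto (a,b),
\end{align}
and heights add: $\height(x') = \height(a)+\height(b)$. Note that this step does not use \cref{cond_denom}; it only uses $d_iv_i\in L$.

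Next I would check that the bijection is $\GAMMA$-equivariant. The map $\rho'(\gamma)$ sends $d_iv_i'$ to $d_iv_{\sigma_\gamma(i)}' = d_{\sigma_\gamma(i)}v_{\sigma_\gamma(i)}'$, using the hypothesis $d_i = d_{\sigma_\gamma(i)}$. So $\rho'(\gamma)$ permutes the coefficients $c_i$, and therefore commutes with taking floor and fractional parts. It follows that the decomposition of $\rho'(\gamma)x'$ is $(\rho'(\gamma)a,\rho'(\gamma)b)$. On characters at height $m$ this gives
\begin{align}
	\chi_{P,m} = \sum_{j+k=m} \chi_{A(P,d)_j}\cdot \chi_{B(P,d)_k},
\end{align}
because the permutation character of a product of $\GAMMA$-sets is the product of the permutation characters. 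Multiplying by $t^m$ and summing over $m$ yields
\begin{align}
	\Ehr_\rho(P;t) = \Bigl(\sum_j \chi_{A(P,d)_j}t^j\Bigr)\Bigl(\sum_k \chi_{B(P,d)_k}t^k\Bigr).
\end{align}

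It remains to identify the $A$-series with $1/\det(I-T'R_V')$. The set $A(P,d)$ is in bijection with $\mathbb{Z}_{\geq 0}^{\ell+1}$ by linear independence, and $\height(a) = \sum a_i d_i$. Under this bijection $\GAMMA$ acts by permuting coordinates via $\sigma_\gamma$, so $A(P,d)_j$ corresponds to the monomial basis of $\mathbb{C}[z_0,\ldots,z_\ell]_j$ with $\deg z_i = d_i$, with the action induced by $R_V'$. I would then apply \cref{lem1.1} with $\ell+1$ variables. For this I must supply $P_\gamma$ diagonalizing the permutation matrix $R_V'(\gamma)$ and commuting with $T'$. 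Since $d$ is constant on the cycles of $\sigma_\gamma$, I can diagonalize each cycle block separately, for example with a Fourier matrix. This gives a block-diagonal $P_\gamma$ whose blocks sit inside the blocks of equal $d_i$, so it commutes with $T'$.

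Alternatively, one can compute directly. The trace of $\gamma$ on the monomials of degree $j$ counts the monomials fixed by $\sigma_\gamma$, and the resulting generating function is $\prod_{\text{cycles } c}(1-t^{d_c|c|})^{-1}$. This equals $\det(I-T'R_V'(\gamma))^{-1}$, since a $k$-cycle with all weights $t^{d}$ contributes $1-t^{dk}$. I expect this identification, meaning the check of the hypothesis of \cref{lem1.1}, to be the only point needing care; the rest is bookkeeping. Effectivity follows because each $\chi_{B(P,d)_m}$ is the permutation character of the finite $\GAMMA$-set $B(P,d)_m$. Polynomiality follows because $B(P,d)$ is finite.
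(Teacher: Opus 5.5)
Your proposal is correct and follows the same route as the paper. You decompose the lattice points of the cone as $A(P,d)+B(P,d)$ to factor $\Ehr_\rho(P;t)$, and then identify the $A$-series with the graded character of $\mathbb{C}[z_0,\ldots,z_\ell]$ under $R_V'$ via \cref{lem1.1}; your explicit checks of equivariance (using $d_i=d_{\sigma_\gamma(i)}$) and of the hypothesis of \cref{lem1.1} (cycle-wise diagonalization commuting with $T'$, or the direct cycle count) fill in steps the paper leaves implicit.
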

If $d_0,\ldots,d_\ell$ satisfy the condition \cref{cond_denom}, then evaluating the above equation at the identity element $1 \in \GAMMA$ recovers \cref{s31}.
Hence \cref{thm2.1} is an equivariant analogue of \cref{thmStanley}.

\begin{proof}[Proof of \cref{thm2.1}]
	Let 
	\begin{align}
		C \ceq \bigset{c_0v_0' + \cdots + c_\ell v_\ell' \in L'}{c_0,\ldots,c_\ell \in \mathbb{R}_{\geq 0}},
	\end{align}
	and
	\begin{align}
		C_m \ceq \bigset{c \in C}{\height(c) = m}
	\end{align}
	for each $m \in \mathbb{Z}_{\geq 0}$.
	It is clear that $C$ is the set of sums of an element of $A(P,d)$ and an element of $B(P,d)$:
	\begin{align}
		C = \bigset{a + b}{a \in A(P,d),\ b \in B(P,d)}.
	\end{align}
	Therefore we can see that
	\begin{align}
		\Ehr_\rho(P;t)
		 = \sum_{m \in \mathbb{Z}_{\geq 0}}\chi_{C_m}t^m 
		 = \left( 
			\sum_{m \in \mathbb{Z}_{\geq 0}}\chi_{A(P,d)_m}t^m
			 \right)\left(
			\sum_{m \in \mathbb{Z}_{\geq 0}}\chi_{B(P,d)_m}t^m 
			 \right).	\label{s1}
	\end{align}

	Let $\mathbb{C}A(P,d)$ be the $\mathbb{C}$-linear space generated by $A(P,d)$, with the grading defined by $\deg{a} = \height{a}$ for $a \in A(P,d)$.
	Each subspace $\mathbb{C}A(P,d)_m$ is a $\GAMMA$-representation and $\chi_{A(P,d)_m}$ is its character.
	Let $z_0,\ldots,z_\ell$ be variables with degrees $\deg(z_i) = d_i$ for each $i \in \{0,\ldots,\ell\}$, and let $\mathbb{C}[z] \ceq \mathbb{C}[z_0,\ldots,z_\ell]$ be a graded $\mathbb{C}$-algebra.
	Then the homomorphism $R_V' : \GAMMA \lra \GL(\mathbb{Z}^{\ell+1})$ defines a representation on $\mathbb{C}[z]$.
	Furthermore, a map $\phi : (\mathbb{C}A(P,d),\, \rho') \lra (\mathbb{C}[z],R_V')$ defined by
	\begin{align}
		\phi : a_0d_0v_0' + \cdots + a_\ell d_\ell v_\ell' \lmapsto z_0^{a_0} \cdots z_\ell^{a_\ell}
	\end{align}
	is an isomorphism of $\GAMMA$-representations that preserves the degree.
	Therefore, by \cref{lem1.1}, we have
	\begin{align}
		\sum_{m \in \mathbb{Z}_{\geq 0}}\chi_{A(P,d)_m} t^m = \sum_{m \in \mathbb{Z}_{\geq 0}}\chi_{\mathbb{C}[z]_m} t^m = \frac{1}{\det(I - T'R'_V)}.	\label{s2}
	\end{align}
	Hence \cref{s1} and \cref{s2} imply \cref{sthm2.1}.
\end{proof}

Let $d(P)$ be a positive integer defined by
\begin{align}
	d(P) = \min\bigset{k \in \mathbb{Z}_{>0}}{kv_0,\ldots,kv_\ell \in L}.
\end{align}
\begin{corollary}\label{cor2.3}
	If $d = (d(P), \ldots, d(P))$, then 
	\begin{align}
		\Ehr_\rho(P;t) = \frac{\sum_{m \in \mathbb{Z}_{\geq 0}}\chi_{B(P,d)_m} t^m}{(1-t^{d(P)})\det(I - t^{d(P)}\rho )}.
	\end{align}
\end{corollary}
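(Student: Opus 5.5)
Since every entry of $d$ equals $d(P)$, the hypothesis $d_i = d_{\sigma_\gamma(i)}$ of \cref{thm2.1} is trivially satisfied, and $d(P)v_i \in L$ for all $i$ by definition of $d(P)$. So \cref{thm2.1} applies with $T' = t^{d(P)} I_{\ell+1}$ and gives
\begin{align}
	\Ehr_\rho(P;t) = \frac{\sum_{m \in \mathbb{Z}_{\geq 0}}\chi_{B(P,d)_m} t^m}{\det(I - t^{d(P)}R_V')}.
\end{align}
It then suffices to show, for each $\gamma \in \GAMMA$,
\begin{align}
	\det(I - t^{d(P)}R_V'(\gamma)) = (1-t^{d(P)})\det(I - t^{d(P)}\rho(\gamma)).
\end{align}
Writing $s = t^{d(P)}$, this is an identity between characteristic-type polynomials, so I would aim to show that $R_V'(\gamma)$ and $\rho'(\gamma)$ are conjugate over $\mathbb{Q}$. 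Then $\det(I - sR_V'(\gamma)) = \det(I - s\rho'(\gamma)) = (1-s)\det(I - s\rho(\gamma))$, the last equality because $\rho'(\gamma)$ is block diagonal $\rho(\gamma) \oplus 1$.

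For the conjugacy I would use the linear map $\Phi : \mathbb{Q}^{\ell+1} \to L'_\mathbb{Q}$ sending the $i$-th standard basis vector $e_i$ to $v_i' = (v_i,1)$. Because $P$ is an $\ell$-simplex, its vertices are affinely independent, so $v_0',\ldots,v_\ell'$ are linearly independent in the $(\ell+1)$-dimensional space $L'_\mathbb{Q}$, and $\Phi$ is an isomorphism. Moreover
\begin{align}
	\rho'(\gamma)(v_i') = (\rho(\gamma)v_i, 1) = v'_{\sigma_\gamma(i)},
\end{align}
which is exactly how the permutation matrix of $\sigma_\gamma$ acts on $e_i$. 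Hence $\Phi$ intertwines the permutation action with $\rho'(\gamma)$, giving $\rho'(\gamma) = \Phi R_V'(\gamma)\Phi^{-1}$, up to matching the row/column convention used for $R_V'$ in \cref{lem1.1}. If that convention produces the transpose instead, the determinant is unchanged, since $\det(I - sM^{\top}) = \det(I - sM)$.

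The only point needing care is this convention: whether $R_V'(\gamma)$ acts on coordinates or on the variables $z_i$, so that the intertwining might hold with the inverse or transpose. Either way the conjugacy class, and hence the determinant, is unaffected, since a permutation matrix is orthogonal and its inverse is its transpose.
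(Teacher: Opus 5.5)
Your proof is correct and is essentially the paper's argument: apply \cref{thm2.1} with the scalar matrix $T' = t^{d(P)}I$, note that $R_V'(\gamma)$ and $\rho(\gamma)\oplus 1$ are matrices of the same map $\rho'(\gamma)$ on $L'_\mathbb{Q}$ in two different bases, and use that a scalar $T'$ commutes with the change-of-basis matrix, so $\det(I - T'R_V'(\gamma))$ is unchanged. You make explicit two things the paper leaves implicit: the isomorphism $e_i \mapsto v_i'$, justified by affine independence of the vertices, and the fact that the transpose convention does no harm. Your block decomposition along $L \oplus \mathbb{Z}$, i.e. the basis $(e_1,0),\ldots,(e_\ell,0),(0,1)$, is also the one that actually gives $\rho(\gamma)\oplus 1$; the paper's stated basis with $(e_i,1)$ does not yield block-diagonal form.
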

Evaluating the above equation at the identity element recovers the form \cref{series}.

\begin{proof}[Proof of \cref{cor2.3}]
	Let $E = \{e_1,\ldots,e_\ell\}$ be a $\mathbb{Z}$-basis for $L$, and define a $\mathbb{Z}$-basis $E'$ for $L'$ as
	\begin{align}
		E' = \{(e_1,1),\, \ldots,\, (e_\ell,1),\, (0,1)\}.
	\end{align}
	Let $R_E : \GAMMA\lra \GL(\mathbb{Z}^\ell)$ and $R_{E'} : \GAMMA\lra \GL(\mathbb{Z}^{\ell+1})$ be group homomorphisms associated with $\rho$ and $\rho'$ with respect to $E$ and $E'$, respectively.
	Then
	\begin{align}
		R_{E'}(\gamma) = \begin{pmatrix}
			R_E(\gamma) & O\\
			O & 1
		\end{pmatrix}
	\end{align}
	for all $\gamma \in \GAMMA$.
	Since an $((\ell + 1) \times (\ell + 1))$-matrix $T' \ceq \diag(t^{d(P)},\ldots,t^{d(P)})$ commutes with every matrix, we have
	\begin{align}
		\det(I - T'R'_{V}(\gamma)) = \det(I - T'R_{E'}(\gamma)) = (1-t^{d(P)})\det(I - TR_E(\gamma)) = (1 - t^{d(P)})\det(I  - t^{d(P)}\rho(\gamma)),
	\end{align}
	where $T \ceq \diag(t^{d(P)},\ldots.t^{d(P)})$ is an $(\ell \times \ell)$-matrix.
\end{proof}

\begin{example}
	Let $L = \mathbb{Z}e_1 \oplus \mathbb{Z}e_2$ and $P \subseteq L_\mathbb{R}$ be a rational simplex with vertices
	\begin{align}
		v_0 \ceq -e_1,\quad v_1 \ceq e_1,\quad v_2 \ceq \frac{1}{2}e_2.
	\end{align}
	Let $\GAMMA = \{1,\gamma\} \cong \mathbb{Z}/2\mathbb{Z}$ and $\rho : \GAMMA \lra \GL(L)$ be a $\GAMMA$-action defined as
	\begin{align}
		\rho(\gamma)(e_1) = -e_1,\qquad \rho(\gamma)(e_2) = e_2.
	\end{align}
	Then $P$ is $\GAMMA$-invariant, and we have
	\begin{align}
		R_V'(\gamma) = \begin{pmatrix}
			0 & 1 & 0\\
			1 & 0 & 0\\
			0 & 0 & 1
		\end{pmatrix}.
	\end{align}

	Let $d^{(1)} \ceq (1,1,2)$ and $T_1' \ceq \diag(t,t,t^2)$.
	Then 
	\begin{align}
		B(P,d^{(1)})_0 = \{0\},\qquad
		B(P,d^{(1)})_1 = \left\{
			\frac{1}{2}v_0' + \frac{1}{2}v_1'
		\right\},
	\end{align}
	and hence
	\begin{align}
		\Ehr_\rho(P;t) = \frac{\bm{1} + \bm{1}t}{\det(I - T_1'R'_V)},
	\end{align}
	where $\bm{1}$ is the trivial character of $\GAMMA$.
	
	Let $d^{(2)} \ceq (2,2,2)$ and $T_2' \ceq \diag(t^2,t^2,t^2)$.
	Then 
	\begin{align}
		B(P,d^{(2)})_0 = \{0\},\qquad
		B(P,d^{(2)})_1 = \left\{
		v_0',\ v_1',\ \frac{1}{2}v_0' + \frac{1}{2}v_1'
		\right\},\\
		B(P,d^{(2)})_2 = \left\{
		\frac{3}{2}v_0' + \frac{1}{2}v_1',\ \frac{1}{2}v_0' + \frac{3}{2}v_1',\ v_0' + v_1'
		\right\},\qquad
		B(P,d^{(2)})_3 = \left\{
		\frac{3}{2}v_0' + \frac{3}{2}v_1'
		\right\},
	\end{align}
	and hence
	\begin{align}
		\Ehr_\rho(P;t) = \frac{\bm{1} + (2\bm{1} + \bm{\delta})t + (2\bm{1} + \bm{\delta})t^2 + \bm{1}t^3}{\det(I - T_2'R'_V)},
	\end{align}
	where $\bm{\delta}$ is the unique non-trivial irreducible character of $\GAMMA$ such that $\bm{\delta}(1) = 1$ and $\bm{\delta}(\gamma) = -1$.
\end{example}

\section{Cross-polytopes} \label{sec3}

\subsection{The hyperoctahedral group}\ \label{sec3.1}

\noindent
In this section, we summarize several notations and facts about the hyperoctahedral group.
See \cite{Carter, Mayer, Young} for details.

Let $\ell \in \mathbb{Z}_{>0}$ and $I_\ell \ceq \{\pm1,\ldots,\pm\ell\}$.
Let $\mathfrak{H}_\ell$ be a subgroup of the symmetric group $\mathfrak{S}_{I_\ell}$ of $I_\ell$ defined as
\begin{align}
	\mathfrak{H}_\ell \ceq \bigset{\eta \in \mathfrak{S}_{I_\ell}}{\eta(-i) = -\eta(i) \tforall i \in I_\ell},
\end{align}
which is called the \textbf{hyperoctahedral group} of rank $\ell$.
It is well known that $\mathfrak{H}_\ell$ is isomorphic to the Weyl group of type $B_\ell$ and $C_\ell$, the semidirect product $\mathfrak{S}_2^\ell \rtimes \mathfrak{S}_\ell$, and the wreath product $\mathfrak{S}_2 \wr \mathfrak{S}_\ell$.
Hence the symmetric group $\mathfrak{S}_\ell$ is regarded as a subgroup of $\mathfrak{H}_\ell$.
Furthermore, the group $\mathfrak{H}_\ell$ is isomorphic to the automorphism group of the $\ell$-dimensional cross-polytope.

For $i \in \{1,\ldots,\ell\}$, let $\tau_i$ be the coordinate reflection in the $i$-th coordinate, that is, 
\begin{align}
	\tau_i(j) = \begin{cases*}
		-j 	& if $j = i$ or $j = -i$;\\
		j	& otherwise.	
	\end{cases*}
\end{align}
There are three types of permutations corresponding to reflections of the Weyl group of type $B_\ell$ and $C_\ell$:
\begin{iitemize}
	\item $(i\ j) \quad$ for $i,j \in \{1\ldots,\ell\},\ i \neq j$;
	\item $\tau_i \quad$ for $i \in \{1,\ldots,\ell\}$;
	\item $(i\ j)\tau_i\tau_j \quad $ for $i,j \in \{1,\ldots,\ell\},\ i \neq j$,
\end{iitemize}
where $(i\ j)$ is the permutation of $\mathfrak{S}_\ell$ that switches $i$ and $j$.

An \textbf{integer partition} is a sequence of weakly decreasing positive integers.
For an integer partition $\lambda$, the notation $|\lambda|$ is used for the sum of integers in $\lambda$, and $\ell(\lambda)$ is used for the length of $\lambda$.
An integer partition $\lambda$ is a partition of $n$ if $|\lambda| = n$, denoted by $\lambda \vdash n$.
Note that we consider the empty set $\emptyset$ to be a partition of $0$ of length $\ell(\emptyset) = 0$.
A \textbf{bipartition} is a pair of integer partitions.
For a bipartition $(\lambda,\mu)$, define $|(\lambda,\mu)| \ceq |\lambda| + |\mu|$ and $\ell(\lambda,\mu) \ceq \ell(\lambda) + \ell(\mu)$.
A bipartition $(\lambda,\mu)$ is a bipartition of $n$ if $|(\lambda,\mu)| = n$.

The hyperoctahedral group $\mathfrak{H}_\ell$ has the following two types of cyclic permutations:
\begin{align}
	(i_1 \cdots i_t)_+ = \begin{pmatrix}
		i_1 & i_2 & \cdots & i_{t-1} & i_t\\
		i_2 & i_3 & \cdots & i_t & i_1
	\end{pmatrix},\qquad 
	(i_1 \cdots i_t)_- = \begin{pmatrix}
		i_1 & i_2 & \cdots & i_{t-1} & i_t\\
		i_2 & i_3 & \cdots & i_t & -i_1
	\end{pmatrix},
\end{align}
where $i_1,\ldots,i_t \in I_\ell$ have pairwise different absolute values.
We refer to the former as an \textbf{even cyclic permutation} and the latter as an \textbf{odd cyclic permutation}.
Note that an even cyclic permutation of length $1$ is the identity, and the odd cyclic permutation $(i)_-$ of length $1$ is equal to the permutation $\tau_i$.
Each element $\eta \in \mathfrak{H}_\ell$ can be uniquely decomposed into a product of cyclic permutations, up to the order of the factors.
Let $\lambda$ (resp. $\mu$) be an integer partition given by the lengths of even (resp. odd) cyclic permutations in such a cycle decomposition of $\eta$.
The bipartition $(\lambda,\mu)$ is called a \textbf{cycle type} of $\eta$.
Two elements of $\mathfrak{H}_\ell$ are conjugate if and only if they have the same cycle type.
Hence conjugacy classes of $\mathfrak{H}_\ell$ are parameterized by bipartitions of $\ell$.
Note that an element of $\mathfrak{H}_\ell$ with cycle type $(\lambda,\emptyset)$ is conjugate to some element of the subgroup $\mathfrak{S}_\ell$ with cycle type $\lambda$.

Let $(\lambda,\mu)$ be a bipartition of $\ell$.
The \textbf{standard} element for $(\lambda,\mu)$ is an element $\eta \in \mathfrak{H}_\ell$ defined as
\begin{align}
	\eta(i) = \begin{cases*}
		i+1	& if $p_{k-1} < i < p_k$ for some $k \in\{1,\ldots,\ell(\lambda,\mu)\}$;\\
		p_{k-1} + 1			& if $i = p_{k}$ for some $k \in \{1,\ldots,\ell(\lambda)\}$;\\
		-(p_{k-1} + 1)			& if $i = p_{k}$ for some $k \in \{\ell(\lambda) + 1,\ldots,\ell(\lambda,\mu)\}$
	\end{cases*}
\end{align}
for $i \in \{1,\ldots,\ell\}$, where $(p_0,\, p_1,\, \ldots,\, p_{\ell(\lambda,\mu)})$ is the prefix sum of $(\lambda,\mu)$, that is, 
\begin{align}
	p_k = \begin{cases*}
		0												& if $k = 0$;\\
		\sum_{i = 1}^k\lambda_i							& if $k \in \{1,\ldots,\ell(\lambda)\}$;\\
		|\lambda| + \sum_{i=1}^{k - \ell(\lambda)}\mu_i	& if $k \in \{\ell(\lambda) + 1,\ldots,\ell(\lambda,\mu)\}$.
	\end{cases*}
\end{align}
For example, if $\ell= 8$ and $(\lambda,\mu) = ((2,1),(3,2))$, then the standard element for $(\lambda,\mu)$ is 
\begin{align}
	(1\ 2)_+(3)_+(4\ 5\ 6)_-(7\ 8)_- = \begin{pmatrix}
		1 & 2 & 3 & 4 & 5 & 6 & 7 & 8\\
		2 & 1 & 3 & 5 & 6 & -4 & 8 & -7
	\end{pmatrix}.
\end{align}

Furthermore, irreducible characters of $\mathfrak{H}_\ell$ are also parameterized by bipartitions of $\ell$.
The irreducible character $\chi^{(\lambda,\emptyset)}$ of $\mathfrak{H}_\ell$ corresponding to $(\lambda,\emptyset)$ is induced from the irreducible character $\chi^\lambda$ of $\mathfrak{S}_\ell$ corresponding to $\lambda$.

\subsection{Rational cross-polytopes}\ 

\noindent
Let $\ell, s \in \mathbb{Z}_{>0}$ and $d = (d_1,\ldots,d_s) \in (\mathbb{Z}_{>0})^s$.
Let $m = (m_1,\ldots,m_s) \in (\mathbb{Z}_{>0})^s$ satisfy $\ell = m_1 + \cdots + m_s$.
Suppose that $L$ is a lattice of rank $\ell$ with a $\mathbb{Z}$-basis
\begin{align}
	B \ceq \{
		e_{1,1},\ \ldots,\ e_{1,m_1},\ e_{2,1},\ \ldots,\ e_{2,m_2},\ \ldots\ldots\ldots,\ e_{s,1},\ \ldots,\ e_{s,m_s}
	\}
\end{align}
We consider an $\ell$-dimensional rational polytope 
\begin{align}
	P(d,m) \ceq \conv\Bigset{\pm \frac{1}{d_i}e_{i,j}}{i \in \{1,\ldots,s\},\ j \in \{1,\ldots,m_i\}} \subseteq L_\mathbb{R}.
\end{align}
If $s = 1$ and $d = (d_1) = (1)$, then $P(d,m)$ is an $m_1$-dimensional \textbf{cross-polytope}.
Note that Clarke--Higashitani--K\"olbl consider the case where $s = 2$, $d = (1,2)$ and $m = (\ell-1,1)$ in \cite[\S4]{ClarkeHigashitaniKolbl}.

\begin{proposition}\label{prop3.1}
	\begin{align}
		\Ehr(P(d,m);t) = \frac{(1 + t^{d_1})^{m_1} \cdots (1 + t^{d_s})^{m_s}}{(1-t)(1 - t^{d_1})^{m_1} \cdots (1 - t^{d_s})^{m_s}}.
	\end{align}
\end{proposition}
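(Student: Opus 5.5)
Since $P(d,m)$ is the unit ball of the weighted $\ell^1$-norm $\|x\| = \sum_{i,j} d_i|x_{i,j}|$ with respect to the basis $B$, I will prove the formula by a direct generating-function count. A point $x = \sum_{i,j} x_{i,j}e_{i,j} \in L$ lies in $qP(d,m)$ if and only if
\begin{align}
	\sum_{i=1}^s\sum_{j=1}^{m_i} d_i|x_{i,j}| \leq q .
\end{align}
The count therefore reduces to counting integer vectors with a weighted absolute-value sum at most $q$.

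First, for a single coordinate $x_{i,j} \in \mathbb{Z}$ weighted by $d_i$, I compute
\begin{align}
	\sum_{x \in \mathbb{Z}} t^{d_i|x|} = 1 + 2\sum_{k \geq 1} t^{d_ik} = \frac{1+t^{d_i}}{1-t^{d_i}} .
\end{align}
The coordinates are independent, so the generating function of $\#\{x \in L : \|x\| = n\}$ is the product
\begin{align}
	\prod_{i=1}^s\left(\frac{1+t^{d_i}}{1-t^{d_i}}\right)^{m_i}.
\end{align}

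Next, $\Ell_{P(d,m)}(q) = \sum_{n=0}^{q}\#\{x \in L : \|x\|=n\}$. Taking partial sums of coefficients multiplies the generating function by $1/(1-t)$, which gives exactly the claimed expression for $\Ehr(P(d,m);t)$.

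The only step requiring care is the first: the identification of $qP(d,m)$ with the weighted $\ell^1$-ball $\{\|x\| \le q\}$. The polytope is the convex hull of the points $\pm\frac{1}{d_i}e_{i,j}$. Each of these points has norm $1$, and the set $\{\|x\|\le 1\}$ is convex, so it contains the hull.

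For the converse, let $\|x\| \leq 1$. I write
\begin{align}
	x = \sum_{i,j} \bigl(d_i|x_{i,j}|\bigr)\cdot\left(\operatorname{sgn}(x_{i,j})\tfrac{1}{d_i}e_{i,j}\right) + \Bigl(1-\|x\|\Bigr)\cdot 0 .
\end{align}
Here $0$ is the midpoint of $\pm\frac{1}{d_1}e_{1,1}$, so this expresses $x$ as a convex combination of vertices. Scaling by $q$ then gives the description of $qP(d,m)$. I do not expect any real obstacle beyond this bookkeeping.
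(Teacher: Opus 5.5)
Your proposal is correct and follows the paper's proof essentially verbatim. The paper likewise writes the lattice-point count of $qP(d,m)$ as $\sum_{n=0}^q X_n$, where $X_n$ is the number of integer vectors of weighted $\ell^1$-norm $n$, factors $\sum_n X_n t^n$ as $\prod_{i}\bigl(\sum_{x\in\mathbb{Z}} t^{d_i|x|}\bigr)^{m_i}$, and multiplies by $1/(1-t)$.

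Your explicit check that $qP(d,m)$ is the weighted $\ell^1$-ball, which the paper uses without comment, is a welcome addition. One small wording point: $0$ is not a vertex, but it lies in the hull as the midpoint you mention, so your convex-combination argument still goes through.
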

\begin{proof}
	For $n \in \mathbb{Z}_{\geq 0}$, define
	\begin{align}
		X_n \ceq \#\Bigset{(x_{ij})_{1 \leq i \leq s,\, 1 \leq j \leq m_i} \in \mathbb{Z}^\ell}{\sum_{i=1}^sd_i\sum_{j=1}^{m_i}|x_{ij}| = n}.
	\end{align}
	Then
	\begin{align}
		\Ell_{P(d,m)}(q) = \#\Bigset{(x_{ij})_{1 \leq i \leq s,\, 1 \leq j \leq m_i} \in \mathbb{Z}^\ell}{\sum_{i=1}^sd_i\sum_{j=1}^{m_i}|x_{ij}| \leq q} = \sum_{n=0}^q X_n.
	\end{align}
	Hence
	\begin{align}
		\Ehr(P(d,m);t)
		&= \sum_{q \in \mathbb{Z}_{\geq 0}} \sum_{n = 0}^q X_n t^q\\
		&= \left( \sum_{q \in \mathbb{Z}_{\geq 0}} t^q \right) \left( \sum_{n \in \mathbb{Z}_{\geq 0}}X_n t^n \right) 			\\
		&= \left( \sum_{q \in \mathbb{Z}_{\geq 0}} t^q \right) \prod_{i=1}^s \left( \sum_{x \in \mathbb{Z}}t^{d_i|x|} \right)^{m_i}	\\
		&= \frac{(1 + t^{d_1})^{m_1} \cdots (1 + t^{d_\ell})^{m_s}}{(1-t)(1 - t^{d_1})^{m_1} \cdots (1 - t^{d_\ell})^{m_s}}. \qedend
	\end{align}
\end{proof}

Suppose that $d_1,\ldots,d_s$ are distinct.
For each $i \in \{1,\ldots,s\}$, the hyperoctahedral group $\mathfrak{H}_{m_i}$ acts on $\{\pm e_{i,1},\ldots, \pm e_{i,m_i}\}$ as 
\begin{align}
	\eta : e_{i,j} \lmapsto  \begin{cases*}
		e_{i,\, \eta(j)}		& if $\eta(j) > 0$;\\
		-e_{i,\, -\eta(j)}	& if $\eta(j) < 0$
	\end{cases*}
	\quad\tfor j \in \{1,\ldots,m_i\}.
\end{align}
Hence $\GAMMA \ceq \mathfrak{H}_{m_1} \times \cdots \times \mathfrak{H}_{m_s}$ acts linearly on $L$, and let $\rho : \GAMMA \lra \GL(L)$ denote the corresponding representation.
The group $\GAMMA$ is the automorphism group of $P(d,m)$.

\begin{lemma}\label{lem3.2}
	Suppose that $d_1,\ldots,d_s$ are distinct.
	Let $\eta = \eta_1 \cdots \eta_s \in \GAMMA$ ($\eta_i \in \mathfrak{H}_{m_i}$) and $(\lambda^{(i)},\mu^{(i)})$ denote the cycle type of $\eta_i$.
	Define 
	\begin{align}
		d(\eta) \ceq \left(
			\lambda^{(1)}_1d_1,\ \ldots,\ \lambda^{(1)}_{\ell(\lambda^{(1)})}d_1,\ \ldots\ldots\ldots,\ \lambda^{(s)}_1d_s,\ \ldots,\ \lambda^{(s)}_{\ell(\lambda^{(s)})}d_s 
		\right),\qquad
		1 \ceq (1,\ldots,1).
	\end{align}
	Then we have
	\begin{align}
		P(d,m)^\eta \cong P(d(\eta),1).
	\end{align}
\end{lemma}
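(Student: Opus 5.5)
Since $\GAMMA=\mathfrak{H}_{m_1}\times\cdots\times\mathfrak{H}_{m_s}$ acts blockwise, the fixed subspace $L_\mathbb{R}^\eta$ splits as the direct sum of the fixed subspaces of the $\eta_i$ on the blocks $\bigoplus_j\mathbb{R}e_{i,j}$. I will therefore first determine $L^\eta=L\cap L_\mathbb{R}^\eta$ cycle by cycle. Then I will show that $P(d,m)^\eta=P(d,m)\cap L_\mathbb{R}^\eta$ is carried onto $P(d(\eta),1)$ by a lattice isomorphism $L^\eta\cong\mathbb{Z}^{\ell(\lambda^{(1)})+\cdots+\ell(\lambda^{(s)})}$. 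The statement $\cong$ is read as equality up to such a unimodular identification of the fixed sublattice.

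\emph{Step 1: the fixed lattice.} Take a cycle $c=(i_1\cdots i_t)_\pm$ of $\eta_i$, and write $j_k=|i_k|$ and $\epsilon_k=\operatorname{sign}(i_k)$. On the span of $e_{i,j_1},\ldots,e_{i,j_t}$, $\eta$ sends $\epsilon_k e_{i,j_k}\mapsto \epsilon_{k+1}e_{i,j_{k+1}}$ for $k<t$, and it sends the last basis vector to $\pm\epsilon_1e_{i,j_1}$ according to parity.
\begin{itemize}
\item For an even cycle, the fixed vectors are exactly the multiples of $f_c\ceq\sum_{k=1}^t\epsilon_ke_{i,j_k}$.
\item For an odd cycle, a fixed vector $x$ satisfies $x=-x$ on that block, so only $0$ is fixed.
\end{itemize}
Hence $L^\eta=\bigoplus_c\mathbb{Z}f_c$, with $c$ ranging over even cycles of all $\eta_i$. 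This is a primitive sublattice of rank $N\ceq\sum_i\ell(\lambda^{(i)})$. It is primitive because each $f_c$ has entries $\pm1$ on disjoint supports, so $L_\mathbb{R}^\eta\cap L=\bigoplus\mathbb{Z}f_c$.

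\emph{Step 2: the polytope.} $P(d,m)$ is the unit ball of the norm $\|x\|=\sum_{i,j}d_i|x_{i,j}|$. A point $x=\sum_c y_cf_c$ with $c$ an even cycle of length $\lambda$ in block $i$ has norm $\|x\|=\sum_c \lambda d_i|y_c|$. Therefore
\[
P(d,m)^\eta=\Bigset{\sum_c y_cf_c}{\sum_c \lambda_cd_{i(c)}|y_c|\le1}.
\]
Under the identification $f_c\mapsto$ standard basis vector, this is precisely $\conv\{\pm\frac{1}{\lambda_cd_{i(c)}}e_c\}$, i.e.\ $P(d(\eta),1)$ with the entries of $d(\eta)$ ordered as in the statement. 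Here I use the ordering of cycles given by the standard element up to conjugation. Conjugation does not matter, since $P^{\eta}$ and $P^{g\eta g^{-1}}=g(P^\eta)$ are lattice-equivalent via $\rho(g)$.

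The only delicate points are the following. First, verifying that odd cycles contribute nothing, including the length-$1$ case $\tau_j$. Second, verifying that the resulting identification is an isomorphism of lattices rather than merely of vector spaces; this is the primitivity check above. Finally, note that $P(d(\eta),1)$ is defined even when entries of $d(\eta)$ repeat, so the distinctness hypothesis is not needed for the identification itself. It is needed only so that $\GAMMA$ is the symmetry group under consideration. If no even cycle exists, both sides are the point $\{0\}$, a degenerate case consistent with $N=0$.
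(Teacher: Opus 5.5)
Your proof is correct and takes essentially the same approach as the paper. The paper conjugates each $\eta_i$ to the standard element and writes $P(d,m)^\eta$ as the convex hull of $\pm\frac{1}{\lambda^{(i)}_k d_i}$ times the sums of basis vectors over the even cycles; you instead work cycle by cycle for an arbitrary $\eta$ and also check explicitly that $L^\eta=\bigoplus_c\mathbb{Z}f_c$, which is the lattice-level identification the paper leaves implicit.
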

\begin{proof}
	We can assume that each $\eta_i$ is the standard element for $(\lambda^{(i)},\mu^{(i)})$.
	Then we have
	\begin{align}
		P(d,m)^\eta = \conv\Bigset{\pm\frac{1}{\lambda^{(i)}_k d_i}\Bigl(e_{i,\,p^{(i)}_{k-1} + 1} + \cdots + e_{i,\, p^{(i)}_k}\Bigr)}{i \in \{1,\ldots,s\},\ \ k \in \{1,\ldots,\ell(\lambda^{(i)})\}}, 
	\end{align}
	where $(p^{(i)}_0,\ldots,p^{(i)}_{\ell(\lambda^{(i)})})$ is the prefix sum of $\lambda^{(i)}$.
\end{proof}

It follows from \cref{prop3.1} and \cref{lem3.2} that
\begin{align}
	\Ehr_\rho(P(d,m);t)(\eta) 
	= \sum_{q \in \mathbb{Z}_{\geq 0}}\chi_{P(d,m),q}(\eta) t^q 
	= \Ehr(P(d(\eta),1);t)
	= \frac{ \prod_{i=1}^s \prod_{k=1}^{\ell(\lambda^{(i)})} (1 + t^{\lambda_k^{(i)}d_i}) }{(1-t) \prod_{i=1}^s \prod_{k=1}^{\ell(\lambda^{(i)})} (1 - t^{\lambda_k^{(i)}d_i}) }. \label{s7}
\end{align}

Define an $\ell \times \ell$ diagonal matrix $T$ by
\begin{align}
	T \ceq \diag\Bigl(\, \overbrace{t^{d_1},\ \ldots,\ t^{d_1}}^{m_1},\ \ldots\ldots\ldots,\  \overbrace{t^{d_s},\ \ldots,\ t^{d_s}}^{m_s}\, \Bigr).
\end{align}
Let $R : \GAMMA \lra \GL(\mathbb{Z}^\ell)$ be the homomorphism associated with $\rho$ with respect to the basis $B$.
Then we have
\begin{align}
	\det(I - TR(\eta)) = \prod_{i=1}^s \left(\prod_{k=1}^{\ell(\lambda^{(i)})} (1 - t^{\lambda_k^{(i)}d_i}) \prod_{k=1}^{\ell(\mu^{(i)})} (1 + t^{\mu_k^{(i)}d_i}) \right). \label{s8}
\end{align}

For $i \in \{1,\ldots,s\}$, define $\varphi^{(i)}[t] = \sum_{n \in \mathbb{Z}_{\geq 0}}\varphi_n^{(i)} t^n \in \Cl(\GAMMA)[[t]]$ by
\begin{align}
	\sum_{n \in \mathbb{Z}_{\geq 0}}\varphi^{(i)}_n(\eta) t^n = \prod_{k = 1}^{\ell(\lambda^{(i)})} (1 + t^{\lambda^{(i)}_kd_i})	
	\prod_{k=1}^{\ell(\mu^{(i)})}(1 + t^{\mu^{(i)}_kd_i})
	\tforall \eta \in \GAMMA.	\label{s9}
\end{align}
Thus $\varphi^{(i)}[t]$ is a polynomial of degree $m_id_i$.
Let $\varphi[t] = \sum_{n \in \mathbb{Z}_{\geq 0}}\varphi_nt^n \in \Cl(\GAMMA)[[t]]$ be defined as
\begin{align}
	\varphi[t] = \prod_{i=1}^s\varphi^{(i)}[t].
\end{align}
Then \cref{s7} and \cref{s8} imply
\begin{align}
	\Ehr_\rho(P(d,m);t)
	&= \frac{ \varphi[t] }{(1-t) \det(I - T R) }. \label{EhrCP}
\end{align}

\begin{theorem}\label{thm3.3}
	Each $\varphi_n^{(i)} \in \Cl(\GAMMA)$ is a character of $\GAMMA$.
	Hence $\varphi[t]$ is a polynomial and effective.
	Furthermore, $\varphi[t]$ is palindromic.
	In other words, 
	\begin{align}
		\varphi_n = \varphi_{\deg{\varphi}-n}
	\end{align}
	for all $n \in \{0,\ldots,\deg{\varphi}\}$.
\end{theorem}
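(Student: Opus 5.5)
The plan is to realize each $\varphi^{(i)}[t]$ as the graded character of an explicit permutation representation, namely $\GAMMA$ acting on subsets of $\{1,\ldots,m_i\}$ graded by size. Effectivity will then be immediate, palindromicity will follow from complementation, and both properties will pass to the product $\varphi[t]$.

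First, I will identify $\varphi^{(i)}$ combinatorially. The value $\varphi^{(i)}[t](\eta)$ in \cref{s9} depends only on $\eta_i$. The cycles of $\eta_i$, both even and odd, correspond exactly to the cycles of the underlying permutation $|\eta_i| \in \mathfrak{S}_{m_i}$ given by $j \mapsto |\eta_i(j)|$, and they have the same lengths. So the multiset of cycle lengths of $|\eta_i|$ is $\lambda^{(i)} \cup \mu^{(i)}$, and
\begin{align}
	\varphi^{(i)}[t](\eta) = \prod_{c}\bigl(1 + t^{d_i|c|}\bigr),
\end{align}
where $c$ runs over the cycles of $|\eta_i|$. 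Now let $\GAMMA$ act on the power set of $\{1,\ldots,m_i\}$ through the homomorphism $\GAMMA \to \mathfrak{H}_{m_i} \to \mathfrak{S}_{m_i}$, $\eta \mapsto |\eta_i|$. A subset $S$ is fixed by $|\eta_i|$ if and only if $S$ is a union of cycles of $|\eta_i|$. Expanding the product above therefore gives
\begin{align}
	\varphi^{(i)}[t](\eta) = \sum_{\substack{S \subseteq \{1,\ldots,m_i\}\\ |\eta_i|(S) = S}} t^{d_i\#S}.
\end{align}
Hence $\varphi^{(i)}_n$ is the permutation character of $\GAMMA$ on the $k$-subsets of $\{1,\ldots,m_i\}$ when $n = d_ik$, and $\varphi^{(i)}_n = 0$ otherwise. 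In particular each $\varphi^{(i)}_n$ is a genuine character. Coefficientwise, $\varphi_n = \sum_{n_1+\cdots+n_s = n}\varphi^{(1)}_{n_1}\cdots\varphi^{(s)}_{n_s}$. Products of characters are characters (tensor products), and sums of characters are characters, so $\varphi_n$ is a character. Since only finitely many $\varphi_n$ are nonzero, $\varphi[t]$ is a polynomial and effective.

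For palindromicity, complementation $S \mapsto \{1,\ldots,m_i\}\setminus S$ is a $\GAMMA$-equivariant bijection from $k$-subsets to $(m_i-k)$-subsets. This gives $\varphi^{(i)}_n = \varphi^{(i)}_{m_id_i - n}$. Moreover $\varphi^{(i)}_{m_id_i}$ is the trivial character, which is nonzero, so $\deg\varphi^{(i)} = m_id_i$. A product of palindromic polynomials with nonzero top coefficients is palindromic of degree equal to the sum of the degrees. Therefore $\varphi[t]$ is palindromic with $\deg\varphi = \sum_i m_id_i$, and $\varphi_{\deg\varphi}$ is the trivial character.

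The only step needing real care is the cycle-type bookkeeping in the first step: one must check that odd cycles of $\eta_i$ contribute the factor $(1+t^{\mu_kd_i})$ in exactly the same way as even cycles. This holds because a subset of $\{1,\ldots,m_i\}$ only sees the absolute values $|\eta_i(j)|$, so the signs in odd cycles play no role. Once this is verified, the rest of the argument is formal.
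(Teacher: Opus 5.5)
Your proposal is correct and follows essentially the same route as the paper. The paper likewise identifies $\varphi^{(i)}_n$ (after reducing to $d_i=1$ and a standard element $\eta_i$) with the permutation character of $\mathfrak{H}_{m_i}$ on $n$-subsets of $[m_i]$, via a bijection between pairs $(K_1,K_2)$ of cycle-index sets and $\eta_i$-fixed subsets, and proves palindromicity by complementation; your version is slightly more explicit in handling general $d_i$ and in checking that the product of the palindromic factors $\varphi^{(i)}[t]$ is palindromic of degree $\sum_i m_i d_i$ (using that the top coefficients are trivial characters), a step the paper leaves implicit.
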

\begin{proof}
	We can assume that $d_i = 1$.
	Let $n \in \mathbb{Z}$ satisfy $0 \leq n \leq m_i$.
	We can assume that $\eta_i \in \mathfrak{H}_{m_i}$ is the standard element for $(\lambda^{(i)},\mu^{(i)})$.
	Define a set $X(n,\eta_i)$ by 
	\begin{align}
		X(n,\eta_i) \ceq \Bigset{(K_1,K_2)}{K_1 \subseteq \{1,\ldots,\ell(\lambda^{(i)})\},\ \ K_2 \subseteq \{1,\ldots,\ell(\mu^{(i)})\},\ \ \sum_{k \in K_1}\lambda^{(i)}_k + \sum_{k \in K_2}\mu^{(i)}_k = n}.
	\end{align}
	Then $\#X(n,\eta_i)$ is equal to the coefficient of $t^n$ in the polynomial in \cref{s9}.
	Hence
	\begin{align}
		\varphi^{(i)}_n(\eta_i) = \#X(n,\eta_i).
	\end{align}
	Let $T(n)$ be a set of subsets of $[m_i] \ceq \{1,\ldots,m_i\}$ of cardinality $n$:
	\begin{align}
		T(n) \ceq \bigset{I \subseteq [m_i]}{\#I = n}. \label{s10}
	\end{align}
	The hyperoctahedral group $\mathfrak{H}_{m_i}$ acts on $T(n)$ as follows:
	\begin{align}
		\eta : I = \{i_1,\ldots,i_n\} \lmapsto \bigl\{\, |\eta(i_1)|,\, \ldots,\, |\eta(i_n)| \, \bigr\} \quad \tfor \eta \in \mathfrak{H}_{m_i}.
	\end{align}
	Then there exists a bijection $\PHI : X(n,\eta_i) \lra T(n)^{\eta_i} \ceq \tbigset{I \in T(n)}{\eta_i(I) = I}$ defined as 
	\begin{align}
		\PHI(K_1,K_2) = \bigsqcup_{k \in K_1}\bigset{j \in [m_i]}{p_{k-1}^{(i)} < j \leq p_k^{(i)}} \sqcup \bigsqcup_{k \in K_2}\bigset{j \in [m_i]}{p^{(i)}_{\ell(\lambda^{(i)}) + k-1} < j \leq p^{(i)}_{\ell(\lambda^{(i)}) + k}}.
	\end{align}
	Therefore 
	\begin{align}
		\varphi_n^{(i)}(\eta_i) = \#T(n)^{\eta_i},
	\end{align}
	and hence $\varphi_n^{(i)}$ coincides with the character of the permutation representation of $\mathfrak{H}_{m_i}$ on $T(n)$.
	
	Let $n \in \mathbb{Z}_{\geq 0}$.
	The coefficient of $t^n$ in $\varphi[t]$ is equal to 
	\begin{align}
		\varphi_n = \sum_{\substack{n_1,\ldots,n_s \in \mathbb{Z}_{\geq 0}\\n_1 + \cdots + n_s = n}}\left(\prod_{i=1}^s \varphi^{(i)}_{n_i} \right).
	\end{align}
	Since $\varphi^{(i)}_{n_i}$ is a character of $\mathfrak{H}_{m_i}$ for each $i \in \{1,\ldots,s\}$, the product $\prod_{i=1}^s\varphi^{(i)}_{n_i}$ can be regarded as a character of $\GAMMA$.
	Hence $\varphi[t]$ is effective.

	We will prove that each $\varphi^{(i)}[t]$ is palindromic.
	Let $n \in \{0,\ldots,\deg{\varphi^{(i)}}\}$.
	Then the map $\kappa : T(n) \lra T(\deg{\varphi^{(i)}}-n)$ defined as
	\begin{align}
		\kappa : I \lmapsto [m_i]\setminus I
	\end{align}
	is a bijection such that
	\begin{align}
		\kappa(\eta(I)) = \eta(\kappa(I)) \quad \tfor \eta \in \mathfrak{H}_{m_i}.
	\end{align}
	Hence $\varphi_n^{(i)}(\eta) = \#T(n)^\eta = \#T(\deg{\varphi^{(i)}}-n)^\eta = \varphi_{\deg{\varphi^{(i)}}-n}^{(i)}(\eta)$ for any $\eta \in \mathfrak{H}_{m_i}$
\end{proof}
	
\begin{remark}\label{rmk4}
	We again assume that $d_i = 1$.
	Let $n \in \mathbb{Z}$ satisfy $0 \leq n \leq m_i$.
	Consider a Young diagram $Y$ given by the integer partition $\nu$ defined as
	\begin{align}
		\nu \ceq \begin{cases*}
			(m_i-n,\, n)	& if $n \leq \frac{m_i}{2}$;\\
			(n,\, m_i-n)	& if $n > \frac{m_i}{2}$.
		\end{cases*}
	\end{align}
	A numbering of $Y$ is an assignment of the integers $\{1,\ldots,m_i\}$ to the boxes of $Y$ such that all the entries are distinct.
	A tabloid is an equivalence class of numberings of $Y$, where two numberings are equivalent if the corresponding rows contain the same entries.
	Let $\mathcal{T}(\nu)$ denote the set of all tabloids of $Y$.
	We can see that there exists a bijection between $\mathcal{T}(\nu)$ and $T(n)$
	as $\mathfrak{H}_{m_i}$-sets.
	Let $M^\nu$ denote the $\mathbb{C}$-linear space generated by $\mathcal{T}(\nu)$, which plays an important role in the representation theory of the symmetric group $\mathfrak{S}_{m_i}$.
	Then $\varphi^{(i)}_n$ is equal to the character of the representation $M^\nu$.
	In particular, it is shown in \cite[\S7.2 (6)]{Fulton} that the character of $M^\nu$ is equal to the induced character of the trivial character from $\mathfrak{S}_{m_i-n} \times \mathfrak{S}_n$ to $\mathfrak{S}_{m_i}$.
	Furthermore, $M^\nu$ can be decomposed into a direct sum of Specht modules as a $\mathbb{C}[\mathfrak{S}_{m_i}]$-module as follows \cite[\S7.2 Corollary 1]{Fulton}:
	\begin{align}
		M^\nu \cong \bigoplus_{\substack{\lambda \vdash m_i\\\lambda \trianglerighteq \nu}}(S^\lambda)^{\oplus K_{\lambda,\nu}}, \label{s11}
	\end{align}
	where $K_{\lambda,\nu}$ is the Kostka number, and $\lambda \trianglerighteq \nu$ if and only if
	\begin{align}
		\lambda_1 + \cdots + \lambda_k \geq \nu_1 + \cdots + \nu_k \tforall k.
	\end{align}
	Moreover, \cref{s11} also gives an irreducible decomposition as a $\mathbb{C}[\mathfrak{H}_{m_i}]$-module.
	Hence we have
	\begin{align}
		\varphi_n^{(i)} = \sum_{\substack{\lambda \vdash m_i\\ \lambda \trianglerighteq \nu}}K_{\lambda,\nu}\chi^{(\lambda,\emptyset)}.
	\end{align}
\end{remark}

For $i \in \{1,\ldots,s\}$, let $N_{m_i}$ be a subgroup of $\mathfrak{H}_{m_i}$ generated by the coordinate reflections.
Let $N \ceq N_{m_1} \times \cdots \times N_{m_s} \subseteq \GAMMA$, that is, 
\begin{align}
	N = \Bigset{\tau_J \in \GAMMA}{J \subseteq \bigset{(i,j)}{i \in \{1,\ldots,s\},\ j \in \{1,\ldots,m_i\}}},
\end{align}
where $\tau_J$ is defined as
\begin{align}
	\tau_J : e_{i,j} \lmapsto \begin{cases*}
		-e_{i,j}	& if $(i,j) \in J$;\\
		e_{i,j}		& if $(i,j) \not\in J$.
	\end{cases*}
\end{align}

Let $\rho|_N$ and $R|_N$ be the restrictions of $\rho$ and $R$ on $N$, respectively.

\begin{theorem}\label{thm4.6}
	
	\begin{align}
		\Ehr_{\rho|_N}(P(d,m);t)
		= \frac{(1 + t^{d_1})^{m_1} \cdots (1 + t^{d_s})^{m_s} \cdot \bm{1}}{(1-t)\det(I - TR|_N)},	\label{s6}
	\end{align}
	where $\bm{1}$ is the trivial character of $N$.
\end{theorem}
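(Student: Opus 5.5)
We prove \cref{s6} by evaluating both sides pointwise at each $\tau_J \in N$ and comparing the resulting rational functions in $t$. Since $N$ is abelian and every element is diagonal in the basis $B$, this reduces to an explicit computation. The right-hand side makes sense as a class-function-valued series, because the numerator is $(1+t^{d_1})^{m_1}\cdots(1+t^{d_s})^{m_s}$ times the trivial character, which is manifestly effective and palindromic.

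\emph{Step 1: cycle type of $\tau_J$.} Viewed as an element $\eta = \eta_1\cdots\eta_s \in \GAMMA$, each component $\eta_i$ of $\tau_J$ is a product of the coordinate reflections $(j)_-$ for $(i,j) \in J$, together with trivial cycles $(j)_+$ for the remaining $j$. Writing $a_i \ceq \#\{j : (i,j) \in J\}$, the cycle type of $\eta_i$ is therefore $\lambda^{(i)} = (1^{m_i-a_i})$ and $\mu^{(i)} = (1^{a_i})$.

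\emph{Step 2: the numerator $\varphi[t]$.} By the definition \cref{s9}, the polynomial $\varphi^{(i)}[t](\tau_J)$ equals $(1+t^{d_i})^{m_i-a_i}(1+t^{d_i})^{a_i} = (1+t^{d_i})^{m_i}$, whatever $J$ is. Consequently $\varphi[t]$, restricted to $N$, is the constant class function $(1+t^{d_1})^{m_1}\cdots(1+t^{d_s})^{m_s}\cdot\bm 1$.

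\emph{Step 3: conclusion.} We restrict \cref{EhrCP} to $N$. Restriction commutes with evaluation at each element: the left side becomes $\Ehr_{\rho|_N}(P(d,m);t)$, and the determinant becomes $\det(I-TR|_N)$. Combining this with Step 2 gives \cref{s6}. As a direct check, we can also evaluate the determinant explicitly. Since $R(\tau_J)$ is diagonal with entries $\pm1$, \cref{s8} gives
\begin{align}
	\det(I-TR(\tau_J)) = \prod_{i=1}^s(1-t^{d_i})^{m_i-a_i}(1+t^{d_i})^{a_i}.
\end{align}
Meanwhile, \cref{s7} gives
\begin{align}
	\Ehr_\rho(P(d,m);t)(\tau_J) = \frac{\prod_i(1+t^{d_i})^{m_i-a_i}}{(1-t)\prod_i(1-t^{d_i})^{m_i-a_i}}.
\end{align}
Multiplying numerator and denominator of this second expression by $\prod_i(1+t^{d_i})^{a_i}$ reproduces \cref{s6} at $\tau_J$.

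The only point needing care is that \cref{s7} and \cref{lem3.2} were derived under the assumption that $d_1,\ldots,d_s$ are distinct, and the statement here carries the same standing assumption. If that assumption were dropped, we would instead verify \cref{s7} at $\tau_J$ directly. The fixed polytope $P(d,m)^{\tau_J}$ is the cross-polytope on the coordinates not in $J$, and \cref{prop3.1} applies to it directly, so no real obstacle arises. I expect the proof to be short, essentially a specialization of \cref{EhrCP}.
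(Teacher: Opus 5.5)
Your proof is correct, but your main route differs from the paper's. You obtain \cref{s6} by specializing the $\GAMMA$-level identity \cref{EhrCP}. In each factor, $\tau_J$ has cycle type $((1^{m_i-a_i}),(1^{a_i}))$, so by \cref{s9} the numerator $\varphi[t]$ is the constant $\prod_i(1+t^{d_i})^{m_i}$ on $N$, and restricting characters to $N$ finishes the argument.

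The paper says this corollary route is available but argues directly instead. It computes $\det(I-TR|_N(\tau_J))$ from the diagonal matrix $R(\tau_J)$. It then observes that $P(d,m)^{\tau_J}=\conv\{\pm\frac{1}{d_i}e_{i,j} : (i,j)\notin J\}$ and applies \cref{prop3.1} to this fixed polytope. Finally, it multiplies numerator and denominator by $\prod_{(i,j)\in J}(1+t^{d_i})$. This is exactly the fallback you sketch in your last paragraph. Your ``direct check'' is not independent of the main route, since it still goes through \cref{s7} and hence through \cref{lem3.2}.

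The paper's direct argument is self-contained. It uses no cycle types or standard elements and does not pass through \cref{lem3.2}. It also makes plain that distinctness of $d_1,\ldots,d_s$ plays no role, since $N$ acts on $P(d,m)$ for any $d$.

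Your route gives a conceptual reason why the numerator is a multiple of $\bm 1$. It is the restriction to $N$ of the effective numerator $\varphi[t]$ of \cref{thm3.3}. Each constituent $\varphi^{(i)}_n$ is the permutation character on $n$-subsets of $[m_i]$, and coordinate reflections act trivially on these subsets. So $\varphi^{(i)}_n$ restricts to $\binom{m_i}{n}\bm 1$.
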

\begin{proof}
	Although the proof can be obtained as a corollary of \cref{thm3.3}, we will present a direct proof here.

	For each $J \subseteq \tbigset{(i,j)}{i \in \{1,\ldots,s\},\ j \in \{1,\ldots,m_i\}}$, we can see that
	\begin{align}
		\det(I - TR|_N(\tau_J)) = \prod_{(i,j) \in J}(1 + t^{d_i}) \prod_{(i,j) \not\in J}(1 - t^{d_i}),
	\end{align}
	and 
	\begin{align}
		P(d,m)^{\tau_J} = \conv\Bigset{\pm \frac{1}{d_i}e_{i,j}}{(i,j) \not\in J}.
	\end{align}
	Thus, by \cref{prop3.1},
	\begin{align}
		\Ehr_{\rho|_N}(P(d,m);t)(\tau_J)
		&= \sum_{q \in \mathbb{Z}_{\geq 0}}\chi_{P(d,m),q}(\tau_J) t^q \\
		&= \frac{\prod_{(i,j) \not\in J}(1 + t^{d_i})}{(1 - t)\prod_{(i,j) \not\in J}(1 - t^{d_i})} \\
		&= \frac{(1 + t^{d_1})^{m_1} \cdots (1 + t^{d_s})^{m_s}}{(1-t)\det(I - TR|_N(\tau_J))}.
	\end{align}
	This equation implies \cref{s6}.
\end{proof}

\begin{example}\label{ex_CP1}
We consider a polytope $P$ given by 
\begin{align}
	P = P((2,4),(2,1)) = \conv\left\{
		\pm\frac{1}{2}e_1,\ \pm\frac{1}{2}e_2,\ \pm\frac{1}{4}e_3
	\right\} \subseteq \mathbb{R}^3.
\end{align}
Then the automorphism group $\GAMMA$ is equal to $\mathfrak{H}_2 \times\mathfrak{H}_1$.
Let $R : \GAMMA\lra \GL(L)$ be the homomorphism associated with the $\GAMMA$-action on $\mathbb{Z}^3$.
The character table of $\GAMMA$ is shown in \cref{table2}.

\begin{table}[h]
	\caption{The character table of $\mathfrak{H}_2 \times \mathfrak{H}_1$} \label{table2}
	\scalebox{0.9}{
	\begin{tabular}{l|rrrrrrrrrr}
		& \ \ 1 & $(1\ 2)_+$ & $(1)_-$ & $(1)_-(2)_-$ & $(1\ 2)_-$ & $(3)_-$ & $(1\ 2)_+(3)_-$ & $(1)_-(3)_-$ & $(1)_-(2)_-(3)_-$ & $(1\ 2)_-(3)_-$ \\ \hline
		$\bm{1} = \chi^{((2),\emptyset)}\chi^{((1),\emptyset)}$       & 1 & 1          & 1       & 1            & 1          & 1       & 1               & 1            & 1                 & 1               \\
		$\chi_1 = \chi^{(\emptyset,(1^2))}\chi^{((1),\emptyset)}$     & 1 & -1         & -1      & 1            & 1          & 1       & -1              & -1           & 1                 & 1               \\
		$\chi_2 = \chi^{(\emptyset,(2))}\chi^{((1),\emptyset)}$     & 1 & 1          & -1      & 1            & -1         & 1       & 1               & -1           & 1                 & -1              \\
		$\chi_3 = \chi^{((1^2),\emptyset)}\chi^{((1),\emptyset)}$     & 1 & -1         & 1       & 1            & -1         & 1       & -1              & 1            & 1                 & -1              \\
		$\chi_4 = \chi^{((1),(1))}\chi^{((1),\emptyset)}$     & 2 & 0          & 0       & -2           & 0          & 2       & 0               & 0            & -2                & 0               \\
		$\chi_5 = \chi^{((2),\emptyset)}\chi^{(\emptyset,(1))}$         & 1 & 1          & 1       & 1            & 1          & -1      & -1              & -1           & -1                & -1              \\
		$\chi_6 = \chi^{(\emptyset,(1^2))}\chi^{(\emptyset,(1))}$ & 1 & -1         & -1      & 1            & 1          & -1      & 1               & 1            & -1                & -1              \\
		$\chi_7 = \chi^{(\emptyset,(2))}\chi^{(\emptyset,(1))}$ & 1 & 1          & -1      & 1            & -1         & -1      & -1              & 1            & -1                & 1               \\
		$\chi_8 = \chi^{((1^2),\emptyset)}\chi^{(\emptyset,(1))}$ & 1 & -1         & 1       & 1            & -1         & -1      & 1               & -1           & -1                & 1               \\
		$\chi_9 = \chi^{((1),(1))}\chi^{(\emptyset,(1))}$ & 2 & 0          & 0       & -2           & 0          & -2      & 0               & 0            & 2                 & 0              
	\end{tabular}
}
\end{table}

Let $T_1 = \diag(t^2,t^2,t^4)$.
Then the numerator $\varphi[t]$ of \cref{EhrCP} is obtained as
\begin{align}
	\varphi[t] = \left(\sum_{n \in \mathbb{Z}_{\geq 0}}\varphi_n^{(1)}t^n\right)\left(\sum_{n \in \mathbb{Z}_{\geq 0}}\varphi_n^{(2)}t^n\right).	
\end{align}
According to the observations in \cref{rmk4}, $\varphi_n^{(1)}$ and $\varphi_n^{(2)}$ are given as
\begin{align}
	\varphi_n^{(1)} = \begin{cases*}
		\chi^{((2),\emptyset)} 		& if $n = 0 \tor 4$;\\
		\chi^{((1^2),\emptyset)} + \chi^{((2),\emptyset)}		& if $n = 2$;\\
		0		& otherwise,
	\end{cases*}\qquad 
	\varphi_n^{(2)} = \begin{cases*}
		\chi^{((1),\emptyset)} 		& if $n = 0 \tor 4$;\\
		0		& otherwise.
	\end{cases*}
\end{align}
Hence
\begin{align}
	\varphi[t] &= \left( \chi^{((2),\emptyset)} + (\chi^{((1^2),\emptyset)} + \chi^{((2),\emptyset)})t^2 + \chi^{((2),\emptyset)}t^4 \right)\left( \chi^{((1),\emptyset)} + \chi^{((1),\emptyset)}t^4 \right)\\
	&= \bm{1} + (\bm{1} + \chi_3)t^2 + 2\bm{1}t^4 + (\bm{1} + \chi_3)t^6 + \bm{1}t^8.
\end{align}

\end{example}

\section*{Acknowledgment}

The author would like to thank Professor Masahiko Yoshinaga and Professor Akihiro Higashitani for helpful discussions and comments on this research. 
The author also acknowledges support by JSPS KAKENHI, Grant Number 25KJ1735.

%%%%%%%%%%%%%%%%%%%%%%%%%%%%%%%%%%%%%%%%%%%%%%%

\bibliographystyle{amsplain}
\bibliography{bibfile}

\end{document}